\newcommand{\cV}{\mathcal{V}}
\newcommand{\cW}{\mathcal{W}}
\theoremstyle{plain}
\newtheorem{theorem}{Theorem}[section]
\newtheorem{lemma}[theorem]{Lemma}
\newtheorem{prop}[theorem]{Proposition}
\newtheorem{coro}[theorem]{Corollary}
\theoremstyle{definition}
\newtheorem{rema}[theorem]{Remark}
\newtheorem{defi}[theorem]{Definition}
\numberwithin{equation}{section}
\begin{document}

\title[Inverses of structured vector bundles]{Inverses of structured vector bundles}

\author[I. Biswas]{Indranil Biswas}
\address{School of Mathematics,
Tata Institute of fundamental research, Homi Bhabha road, Mumbai 400005, India}
\email{indranil@math.tifr.res.in}

\author[V. P. Pingali]{Vamsi P. Pingali}
\address{Department of Mathematics,
412 Krieger Hall, Johns Hopkins University, Baltimore, MD 21218, USA}
\email{vpingali@math.jhu.edu}

\subjclass[2000]{58A10, 53B15}

\keywords{Structured vector bundle, connection, symplectic bundle, orthogonal bundle}

\begin{abstract}
Structured vector bundles were introduced by J. Simons and D. Sullivan in \cite{SS}.
We prove that all structured vector bundles whose holonomies
lie in ${\rm GL}(N,\mathbb{C})$,\, ${\rm SO}(N,\mathbb{C})$, or ${\rm Sp}(2N,
\mathbb{C})$ have structured inverses. This generalizes a theorem of Simons and
Sullivan proved in \cite{SS}.
\end{abstract}

\maketitle

\section{Introduction}

Differential K-theory is an enhanced version of topological K-theory constructed
by incorporating connections and differential forms. It was developed in order to
refine the families of Atiyah-Singer
index theorem and also to classify the Ramond-Ramond field strengths in string theory
\cite{bun}, \cite{free}. One model of the first group in differential K-theory
$\widehat{K}^0(X)$ for a manifold $X$ is a Grothendieck group of vector
bundles equipped with connections and odd differential forms \cite{kar}. In \cite{SS}, Simons
and Sullivan constructed another model of $\widehat{K}^0(X)$ using just vector
bundles and connections. They got rid of the differential form at the cost of
introducing an equivalence relation among the connections on the vector bundles.
More precisely,
they constructed $\widehat{K}^0(X)$ as the Grothendieck group of ``structured''
vector bundles; the definition of a structured vector bundle is recalled in
Section \ref{sec2}.

In addition to constructing a model of $\widehat{K}^0(X)$, Simons
and Sullivan in \cite{SS} proved an interesting result about
the existence of stable inverses of hermitian structured bundles. (It is Theorem 1.15 in
\cite{SS} which is essentially Theorem \ref{main} below for the unitary group.) However,
their proof works only for the unitary group (and also to the more general
case of compact Lie
groups) because they used the existence of universal connections \emph{\'a la}
Narasimhan-Ramanan \cite{NR}. In \cite{PT} a slightly different proof of the theorem
of Simons and Sullivan was given that did not involve universal connections. In fact,
the proof in \cite{PT} is valid even for connections that are not compatible with
the metric.

All the flat connections considered here will have trivial monodromy representation.
Note that if $\nabla$ is a flat connection with trivial monodromy on a vector bundle
$V$ over $X$, and $x_0$ is a point of $X$, then there is a unique isomorphism $f$ of
$V$ with the trivial vector bundle $X\times V_{x_0}\,\longrightarrow\, X$, equipped with
the trivial connection, such that $f$ is connection preserving and coincides with
the identity map over $x_0$ (here $V_{x_0}$ denotes the fiber of $V$ over $x_0$).

In this paper we prove the following theorem.

\begin{theorem}
Let $G$ be one of the groups ${\rm GL}(N,\mathbb{C})$,~
${\rm SO}(N,\mathbb{C})$,~ ${\rm Sp}(2N, \mathbb{C})$. Given a structured vector bundle
$\mathcal{V} \,=\, [V\, , \{ \nabla \} ]$ on a smooth manifold $X$ such that the holonomy
of some equivalent connection $\nabla$ is in $G$, there exists a structured inverse
$\mathcal{W} \,=\, [W\, , \{ \widetilde{\nabla} \} ]$ with the property that
the holonomy of $\widetilde{\nabla}$ is in the same $G$, satisfying
$$\mathcal{V} \oplus \mathcal {W} \,=\, [X \times \mathbb{C}^M \, ,
\{ \nabla _F \}]$$
where $\nabla_F$ is a flat connection with trivial monodromy on the trivial
vector bundle $X \times \mathbb{C}^M$ over $X$.
\label{main}
\end{theorem}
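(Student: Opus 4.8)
The plan is to produce $\mathcal{W}$ as a controlled perturbation of the evident candidate coming from an embedding of $V$ into a trivial bundle, repairing the connection on the complement while staying inside the class of $G$-connections and without invoking universal connections. For each of the three groups a $G$-structure on a vector bundle consists of a nondegenerate symmetric form together with an orientation (for $\mathrm{SO}$), a nondegenerate alternating form (for $\mathrm{Sp}$), or no extra datum (for $\mathrm{GL}$). Since $G$ deformation retracts onto its maximal compact subgroup, bundles with such a structure over $X$ are classified by the associated real, quaternionic, or complex $K$-theory, so $-[V]$ is again represented by such a bundle $W$; after adjoining trivial $G$-summands to $W$ we may assume that $V\oplus W$ is isomorphic, as a $G$-bundle, to $X\times\mathbb{C}^{M}$ with its standard $G$-form $B_{0}$. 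Fixing such an identification, let $p_{V},p_{W}$ be the $B_{0}$-orthogonal projections and set $\nabla_{0}=p_{V}\circ d$ on $V$ and $\widetilde{\nabla}_{0}=p_{W}\circ d$ on $W$, where $d$ is the flat trivial connection; these preserve the induced forms, hence have holonomy in the relevant $G$.

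I would first establish the Simons--Sullivan-type identity $\nabla_{0}\oplus\widetilde{\nabla}_{0}\sim d$, that is $[X\times\mathbb{C}^{M},\{d\}]=[V,\{\nabla_{0}\}]\oplus[W,\{\widetilde{\nabla}_{0}\}]$. Writing $d=(\nabla_{0}\oplus\widetilde{\nabla}_{0})+\Psi$ with $\Psi$ the second fundamental form (block off-diagonal and $B_{0}$-skew), consider the path $\nabla^{(s)}=(\nabla_{0}\oplus\widetilde{\nabla}_{0})+s\Psi$ for $s\in[0,1]$. The Codazzi identity $d_{\nabla_{0}\oplus\widetilde{\nabla}_{0}}\Psi=0$, valid because $d$ is flat, forces the curvature of $\nabla^{(s)}$ to be $(1-s^{2})$ times the block-diagonal curvature of $\nabla_{0}\oplus\widetilde{\nabla}_{0}$; each transgression term is then of the form $\operatorname{tr}\bigl(\Psi\cdot(\text{block-diagonal})\bigr)$, which is traceless, so the Chern--Simons form of the path vanishes identically. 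The same computation gives $\operatorname{ch}(\nabla_{0})+\operatorname{ch}(\widetilde{\nabla}_{0})=M$ as differential forms.

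In general $\nabla\not\sim\nabla_{0}$; rather than trying to realize $\nabla$ itself as a projected connection --- which is where universal connections would be needed --- I would transfer the discrepancy $\gamma:=CS(\nabla_{0},\nabla)$ onto $W$. By additivity of Chern--Simons forms under $\oplus$ and the identity above, it suffices to find a connection $\widetilde{\nabla}$ on $W$, still with holonomy in $G$, with $CS(\widetilde{\nabla}_{0},\widetilde{\nabla})\equiv-\gamma$ modulo exact forms: then $CS(\nabla\oplus\widetilde{\nabla},d)$ is exact, so $[V,\{\nabla\}]\oplus[W,\{\widetilde{\nabla}\}]=[X\times\mathbb{C}^{M},\{d\}]=[X\times\mathbb{C}^{M},\{\nabla_{F}\}]$ for any flat connection $\nabla_{F}$ with trivial monodromy, the last equality being the uniqueness statement recalled in the introduction (and $d$ is already such a $\nabla_{F}$). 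This identity forces $\operatorname{ch}(\widetilde{\nabla})=M-\operatorname{ch}(\nabla)$, a closed form representing $\operatorname{ch}(W)$ and, for $\mathrm{SO}$ and $\mathrm{Sp}$, concentrated in degrees $\equiv 0\pmod{4}$ --- exactly like the Chern character form of any $G$-connection, since the trace of an odd power vanishes on $\mathfrak{so}$ and on $\mathfrak{sp}$. So one would first deform $\widetilde{\nabla}_{0}$ through $G$-connections to acquire this Chern character form, and the remaining adjustment amounts to changing the closed odd form $CS(\widetilde{\nabla}_{0},\widetilde{\nabla})$ by a prescribed cohomology class, which by the same degree restriction lies in $\bigoplus_{k\ge 1}H^{4k-1}(X;\mathbb{R})$ for $\mathrm{SO}$ and $\mathrm{Sp}$, and in $H^{\mathrm{odd}}(X;\mathbb{R})$ for $\mathrm{GL}$.

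The proof therefore reduces to a realization statement: the Chern character forms and Chern--Simons classes of $G$-connections on $W$ --- which, after further stabilization, may be assumed to be of large rank --- exhaust exactly the spaces just identified. For $\mathrm{GL}(N,\mathbb{C})$ this is the content of the argument of \cite{PT} and uses no universal connections. For $\mathrm{SO}(N,\mathbb{C})$ and $\mathrm{Sp}(2N,\mathbb{C})$ this is the step I expect to be the main obstacle: although these groups are noncompact, the deformation retractions $\mathrm{SO}(L,\mathbb{C})\simeq\mathrm{SO}(L)$ and $\mathrm{Sp}(2L,\mathbb{C})\simeq\mathrm{Sp}(L)$ reduce the relevant homotopy theory to that of the compact forms, whose rational cohomology is an exterior algebra on primitive generators in degrees $3,7,11,\dots$; one then realizes closed Chern--Simons classes by gauge transformations classified by maps into the maximal compact group, together with real rescaling, and realizes the exact remainder by a local Chern--Simons-type construction. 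The degrees occurring here, namely those $\equiv 3\pmod{4}$, are precisely those of the discrepancies $\gamma$ produced by $G$-connections, so nothing one must hit lies outside the range of $G$-connections. Carrying this out yields $\widetilde{\nabla}$, hence $\mathcal{W}$, for all three groups at once.
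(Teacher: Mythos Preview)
Your route diverges from the paper's in both halves of the argument, and for $\mathrm{SO}$ and $\mathrm{Sp}$ the divergence leaves a real gap.

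For $G=\mathrm{GL}(N,\mathbb{C})$ the paper does \emph{not} embed $V$ merely as a topological $G$-bundle and then repair the discrepancy between $\nabla$ and the induced connection $\nabla_0$. It embeds the pair $(V,\nabla)$ directly: an explicit matrix lemma shows that for any rank-$r$ connection matrix $A$ on $\mathbb{R}^n$ one can write down an invertible $(2n+2)r\times(2n+2)r$ matrix function $g$ with $A_{ij}=[dg\,g^{-1}]_{ij}$ for $1\le i,j\le r$, so the \emph{given} connection is literally the connection induced on a sub-bundle of a trivial flat bundle. Whitney embedding transports this to arbitrary $X$. Thus there is no $\gamma=CS(\nabla_0,\nabla)$ to absorb and no realization input is needed; your Simons--Sullivan curvature identity (your second paragraph, which is Lemma~1.16 of \cite{SS}) finishes at once. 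Your reduction to \cite{PT} is valid too, but the paper deliberately gives a new self-contained argument.

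For $\mathrm{SO}(N,\mathbb{C})$ and $\mathrm{Sp}(2N,\mathbb{C})$ the proposal is incomplete. You reduce to a realization statement on the (in general non-trivial) $G$-bundle $W$: every closed form in degrees $\equiv 0\pmod 4$ representing $\mathrm{ch}(W)$ is the Chern character of some $G$-connection, and every odd class in degrees $\equiv 3\pmod 4$ is a Chern--Simons class between two such connections. You flag this yourself as ``the main obstacle,'' and the sketch --- gauge transformations plus ``real rescaling'' plus a local construction --- does not establish it. Pulling back primitives of $H^*(G;\mathbb{R})$ along maps $X\to G$ yields only a lattice; whatever rescaling mechanism you have in mind (presumably scaling a $\mathfrak{g}$-valued one-form by a real parameter) must be combined with a spanning argument for those pullbacks, and the ``local'' step for the exact remainder is itself a $G$-Venice statement on a non-trivial bundle. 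None of this is carried out.

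The paper sidesteps all of it with a two-line trick. Let $(W,\widetilde\nabla)$ be the $\mathrm{GL}$-structured inverse already produced; it carries no $G$-structure. Then $W\oplus W^*$ has the canonical hyperbolic form $\varphi_0$ (symmetric or alternating according to $G$), preserved by $\widetilde\nabla\oplus\widetilde\nabla'$. Since $(E^*,\nabla')$ carries the dual $G$-structure $\varphi'$ and $(W^*,\widetilde\nabla')$ is a $\mathrm{GL}$-structured inverse of it, the bundle
\[
\bigl(E^*\oplus W\oplus W^*,\ \nabla'\oplus\widetilde\nabla\oplus\widetilde\nabla'\bigr)
\]
is a structured inverse of $(E,\nabla)$ whose connection preserves $\varphi'\oplus\varphi_0$, hence has holonomy in $G$. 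No realization theorem, no homotopy theory of $G$, and no $G$-Venice lemma are needed beyond the $\mathrm{GL}$ case.
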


As an immediate consequence of Theorem \ref{main} we have the following corollary.

\begin{coro}
Let $\widehat{K^0}_G(X)$ be the Grothendieck group of structured vector bundles
$\mathcal{V} \,=\, [V,\{ \nabla \}]$ satisfying the
condition that the connection $\nabla$ has holonomy in $G$
(both $G$ and $X$ are as in Theorem \ref{main}). Let $d$ denote the trivial flat connection on a trivial bundle $X\times \mathbb{C}^k$. Then the following two hold:
\begin{enumerate}
\item Every element of $\widehat{K^0}_G(X)$ is of the form $\cV - [k]$ where $[k]
\,=\, [X \times \mathbb{C}^k, \{d\}]$ (so $k\,=\, N$ if $G\,=\, {\rm GL}(N,\mathbb{C})$
or ${\rm SO}(N,\mathbb{C})$ and $k\,=\, 2N$ if $G\,=\, {\rm Sp}(2N, \mathbb{C})$), and
\item $\cV \,= \,\cW$ in $\widehat{K^0}_G(X)$ if and only if $\cV \oplus [\mathcal{N}]
\,=\, \cW \oplus [\mathcal{N}]$ as structured bundles for some flat bundle
$[\mathcal{N}]$ with trivial monodromy.
\end{enumerate}
\label{group}
\end{coro}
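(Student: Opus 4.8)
The plan is to deduce both assertions formally from Theorem \ref{main}, together with two standard inputs: first, the description of the Grothendieck group $K(\cA)$ of a commutative monoid $\cA$, namely that two monoid elements become equal in $K(\cA)$ if and only if they become equal in $\cA$ after adding a common third element; and second, the observation recalled in the Introduction that a flat connection with trivial monodromy on a bundle $\mathcal{N}$ identifies it, as a bundle with connection, with the trivial bundle $X \times \mathcal{N}_{x_0}$ carrying the trivial connection. In particular $[X \times \mathbb{C}^m , \{\nabla_F\}] = [m]$ in the monoid of structured bundles whenever $\nabla_F$ is flat with trivial monodromy. Here the monoid $\cA$ will be the set of isomorphism classes of structured bundles whose holonomy lies in $G$, with operation $\oplus$; one first checks that $\oplus$ preserves this class, since ${\rm GL}(a,\mathbb{C}) \times {\rm GL}(b,\mathbb{C}) \subset {\rm GL}(a+b,\mathbb{C})$ and likewise for ${\rm SO}$ and ${\rm Sp}$, so that $\widehat{K^0}_G(X) = K(\cA)$ is an honest Grothendieck group.

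For part (1), I would take an arbitrary class and write it as $\cV_1 - \cV_2$ with $\cV_1, \cV_2 \in \cA$, then apply Theorem \ref{main} to $\cV_2$ to produce a structured bundle $\cW_2$ with holonomy in $G$ and $\cV_2 \oplus \cW_2 = [X \times \mathbb{C}^k , \{\nabla_F\}]$ for some flat $\nabla_F$ with trivial monodromy. By the remark from the Introduction this last bundle equals $[k]$ as a structured bundle, hence also in $\cA$, so $-\cV_2 = \cW_2 - [k]$ in $\widehat{K^0}_G(X)$ and therefore
\[
\cV_1 - \cV_2 \;=\; (\cV_1 \oplus \cW_2) - [k],
\]
which has the required shape with $\cV = \cV_1 \oplus \cW_2 \in \cA$; in the symplectic case $k$ is automatically even, since $\cV_2$ and $\cW_2$ both have even rank, matching the stated normalization.

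For part (2), the implication $\Leftarrow$ is immediate: if $\cV \oplus [\mathcal{N}] = \cW \oplus [\mathcal{N}]$ as structured bundles, then (as $[\mathcal{N}]$ has trivial, hence admissible, holonomy) this is an equality in $\cA$, so the two classes agree in $\widehat{K^0}_G(X)$, and cancelling the class of $[\mathcal{N}]$ in the group gives $\cV = \cW$. For $\Rightarrow$, suppose $\cV = \cW$ in $\widehat{K^0}_G(X) = K(\cA)$; by the monoid description of the Grothendieck group there is a structured bundle $\cU \in \cA$ with $\cV \oplus \cU = \cW \oplus \cU$ as structured bundles. Applying Theorem \ref{main} to $\cU$ yields $\cU'$ with holonomy in $G$ and $\cU \oplus \cU' = [X \times \mathbb{C}^{m} , \{\nabla_F\}]$, flat with trivial monodromy; taking $\oplus\, \cU'$ on both sides of $\cV \oplus \cU = \cW \oplus \cU$ gives
\[
\cV \oplus [X \times \mathbb{C}^{m} , \{\nabla_F\}] \;=\; \cW \oplus [X \times \mathbb{C}^{m} , \{\nabla_F\}]
\]
as structured bundles, so $[\mathcal{N}] := [X \times \mathbb{C}^{m} , \{\nabla_F\}]$, which is a flat bundle with trivial monodromy, witnesses the right-hand side.

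I do not expect a genuine obstacle here: granted Theorem \ref{main}, the whole argument is bookkeeping in the Grothendieck group. The two places that need a line of care are the verification that $\oplus$ keeps holonomy inside $G$ (so $\widehat{K^0}_G(X)$ really is $K(\cA)$ for a submonoid $\cA$), and the use of the Introduction's uniqueness statement to rewrite $[X \times \mathbb{C}^m , \{\nabla_F\}]$ either as the standard trivial bundle $[m]$ (for part (1)) or as a flat trivial-monodromy bundle (for part (2)); this is exactly what allows the flat connection manufactured by Theorem \ref{main} to be absorbed cleanly.
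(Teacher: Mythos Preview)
Your proposal is correct and follows essentially the same approach as the paper's proof: apply Theorem \ref{main} to produce a structured inverse and then carry out the standard Grothendieck-group bookkeeping. Your write-up is in fact more thorough than the paper's (you supply the trivial $\Leftarrow$ direction in (2), spell out the monoid framework, and make explicit the identification $[X\times\mathbb{C}^m,\{\nabla_F\}]=[m]$ via the remark in the Introduction), but the underlying argument is identical.
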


In \cite{SS}, Simons and Sullivan proved what they called the Venice lemma which 
essentially says that every exact form arises out of Chern character forms of trivial 
bundles (see also \cite{PT}). Using the same ideas as in the proof of Theorem 
\ref{main} we give a proof of the following holonomy version of the Venice lemma.

\begin{prop}\label{holovenice}
Fix $G$ to be one the groups ${\rm GL}(N,\mathbb{C})$,
${\rm SO}(N,\mathbb{C})$ and ${\rm Sp}(2N, \mathbb{C})$.
If $\eta$ is any odd smooth form on $X$, then there exists a trivial bundle $T\,=\,
X \times \mathbb{C}^k$ ($k$ is as in Corollary \ref{group})
and a connection $\nabla$ on it whose holonomy is in $G$ such that
\begin{equation}\label{veniceeq}
\mathrm{ch}(T, \nabla) - \mathrm{ch}(T, d) \,=\, d\eta\, .
\end{equation}
\end{prop}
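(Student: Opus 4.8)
The plan is to deduce the statement from the classical Venice lemma of Simons--Sullivan \cite{SS} (see also \cite{PT}), correcting the holonomy afterwards by the same $W\oplus W^{*}$ device that produces the inverse in Theorem \ref{main}. For $G={\rm GL}(N,\mathbb{C})$ there is nothing to correct, since every connection on a complex bundle has holonomy in ${\rm GL}$ of its rank: the classical Venice lemma directly yields a trivial bundle $X\times\mathbb{C}^{m}$ and a connection $\nabla$ with $\mathrm{ch}(X\times\mathbb{C}^{m},\nabla)-\mathrm{ch}(X\times\mathbb{C}^{m},d)=d\eta$, and one then takes $m$ a multiple of $k=N$, adding $(X\times\mathbb{C}^{N},d)$--summands if needed to reach the rank prescribed by Corollary \ref{group}.

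For $G={\rm SO}(N,\mathbb{C})$ and $G={\rm Sp}(2N,\mathbb{C})$ I would proceed as follows. For a trivial bundle $W$ with connection $\nabla_{0}$, the sum $W\oplus W^{*}$ carries its tautological hyperbolic symmetric (respectively, symplectic) pairing, which is parallel for $\nabla_{0}\oplus\nabla_{0}^{*}$; and since the parallel transport of $\nabla_{0}\oplus\nabla_{0}^{*}$ along a loop is $P\oplus(P^{T})^{-1}$, with determinant $1$, this connection has holonomy in ${\rm SO}$ (respectively ${\rm Sp}$) -- the same construction as in Theorem \ref{main}. Using $\mathrm{ch}_{j}(W^{*},\nabla_{0}^{*})=(-1)^{j}\,\mathrm{ch}_{j}(W,\nabla_{0})$,
\begin{equation*}
\mathrm{ch}(W\oplus W^{*},\nabla_{0}\oplus\nabla_{0}^{*})-\mathrm{ch}(W\oplus W^{*},d)\;=\;2\sum_{j\ \mathrm{even}}\bigl(\mathrm{ch}_{j}(W,\nabla_{0})-\mathrm{ch}_{j}(W,d)\bigr).
\end{equation*}
Because $\mathfrak{so}$ and $\mathfrak{sp}$ are traceless and stable under $A\mapsto-J^{-1}A^{T}J$, a connection with holonomy in ${\rm SO}$ or ${\rm Sp}$ has $\mathrm{ch}_{j}=0$ for all odd $j$, so the left side of \eqref{veniceeq} has components only in degrees $4,8,12,\dots$, which is the relevant range. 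Applying the classical Venice lemma to the exact even form $\tfrac12\,d\eta$ produces $(W,\nabla_{0})$ with $\mathrm{ch}(W,\nabla_{0})-\mathrm{ch}(W,d)=\tfrac12\,d\eta$; this difference has no odd--$j$ component, so the identity above evaluates to $d\eta$. A final rank adjustment by trivial flat summands, as in the ${\rm GL}$ case, finishes the proof.

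To keep the argument self-contained one can replace the appeal to \cite{SS} by building the exact form directly, degree by degree -- this is where the real work lies: an exact $2$--form is $\mathrm{ch}_{1}$ of the line connection $d+2\pi i\gamma$; an exact $4$--form is $\mathrm{ch}_{2}$ of a direct sum of rank--$2$ connections $d+\bigl(\begin{smallmatrix}0 & b_{i}\\ c_{i}&0\end{smallmatrix}\bigr)$, which carry no $\mathrm{ch}_{1}$; and an exact $(4m)$--form can be written as $\sum_{i}(d\beta_{i})^{2m}$ by a partition of unity and a local computation, hence realized by the $\mathfrak{so}(2)$--blocks $d+\bigl(\begin{smallmatrix}0 & \beta_{i}\\ -\beta_{i}&0\end{smallmatrix}\bigr)$ (for ${\rm SO}$) or the analogous $\mathfrak{sp}$--blocks (for ${\rm Sp}$). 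Since each block contributes only in degrees strictly above its leading term, the pieces are assembled by induction on form--degree. The main obstacle is precisely this explicit realization with the holonomy kept inside the prescribed $G$; once the building blocks are in hand, the $W\oplus W^{*}$ step, the Chern character bookkeeping, the check that $\nabla_{0}\oplus\nabla_{0}^{*}$ lands in ${\rm SO}$ rather than merely ${\rm O}$, and the cosmetic adjustment of the rank are all routine.
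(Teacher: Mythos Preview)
Your core approach---apply the classical Venice lemma to $\tfrac{1}{2}d\eta$, then pass to $W\oplus W^{*}$ with $\nabla_{0}\oplus\nabla_{0}^{*}$ to force the holonomy into $G$---is exactly the paper's. The paper's proof is three lines: Venice gives $(\widetilde{T},\nabla_{\widetilde{T}})$ with $\mathrm{ch}(\widetilde{T},\nabla_{\widetilde{T}})-\mathrm{ch}(\widetilde{T},d)=\tfrac{1}{2}d\eta$, it then asserts the same identity for $(\widetilde{T}^{*},\nabla_{\widetilde{T}^{*}})$ ``because $d\eta$ is an even form'', adds the two, and invokes Section~\ref{se3.2}. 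Your check that the holonomy lands in ${\rm SO}$ rather than merely ${\rm O}$ (via $\det(P\oplus(P^{T})^{-1})=1$), and your third-paragraph sketch of an explicit block-by-block Venice construction, are extras the paper does not supply.

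There is, however, a genuine gap at the sentence ``this difference has no odd-$j$ component''. You correctly observe that for $G={\rm SO}$ or ${\rm Sp}$ the \emph{left} side of \eqref{veniceeq} has $\mathrm{ch}_{j}=0$ for all odd $j$; but the \emph{right} side $d\eta$ is not so constrained, since $\eta$ is an arbitrary odd form and $d\eta$ may have nonzero components in degrees $2,6,10,\dots$. Your $W\oplus W^{*}$ step produces only $2\sum_{j\ \mathrm{even}}\bigl(\mathrm{ch}_{j}(W,\nabla_{0})-\mathrm{ch}_{j}(W,d)\bigr)$, i.e.\ the projection of $d\eta$ onto degrees divisible by $4$, not $d\eta$ itself. (The paper's assertion that $\tfrac{1}{2}d\eta=\mathrm{ch}(\widetilde{T}^{*},\nabla_{\widetilde{T}^{*}})-\mathrm{ch}(\widetilde{T}^{*},d)$ commits the identical slip, since $\mathrm{ch}_{j}(\widetilde{T}^{*})=(-1)^{j}\mathrm{ch}_{j}(\widetilde{T})$.) As literally stated, the proposition cannot hold for these $G$ when, say, $\eta$ is a $1$-form with $d\eta\neq 0$; your argument---and the paper's---only establishes it for $\eta$ with $d\eta$ concentrated in degrees $\equiv 0\pmod 4$. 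Your third paragraph tacitly respects this restriction (you only build blocks realizing exact $(4m)$-forms), but the main argument does not.
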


\section{Preliminaries}\label{sec2}

As mentioned in the introduction, in order to define structured vector bundles we
need to define an equivalence relation between connections on vector bundles
on a smooth manifold. To do so, we recall the
definition of the Chern-Simons forms. Throughout, $V$ and $W$ are smooth complex
vector bundles on a smooth manifold $X$. For a connection $\nabla$ on a vector
bundle $V$, let $F_{\nabla}\,\in\, C^\infty(X,\, End(V)\otimes \bigwedge^2 T^*X)$
be the curvature of $\nabla$, and let $$\mathrm{ch} (\nabla) \,=\,
\mathrm{Tr} \exp \left(\frac{\sqrt{-1}}{2\pi} F_{\nabla}\right)$$ be the corresponding
Chern character form on $X$.

\begin{defi}
If $\nabla _1$ and $\nabla _2$ are smooth connections on $V$, then the Chern-Simons form
between them is defined as a sum of odd differential forms $\mathrm{CS}(\nabla _1, \nabla _2)$
modulo exact forms satisfying the following two conditions:
\begin{enumerate}
\item (\emph{Transgression})~\ ~$d\mathrm{CS}(\nabla _1\, , \nabla _2) \,=\,
\mathrm{ch}(\nabla _1) - \mathrm{ch}(\nabla_2)$.

\item (\emph{Functoriality})~\ ~ If $f \,:\, Y \,\longrightarrow\, X$ is a smooth map between
smooth manifolds $Y$ and $X$, then $\mathrm{CS}(f^{*} \nabla _1\, , f^{*} \nabla _2) \,=\,
f^{*} \mathrm{CS}(\nabla _1\, , \nabla _2)$ modulo exact forms.
\end{enumerate}
\label{cs}
\end{defi}

In \cite{SS} an equivalence relation between connections was defined, which we now
recall.

\begin{defi}
If $\nabla _1$ and $\nabla_2$ are two smooth connections on a vector bundle
$V$ on $X$, then $$\nabla _1 \,\thicksim\, \nabla _2$$ if
$\mathrm{CS} (\nabla _1, \nabla _2)\,=\, 0$ modulo exact forms
on $X$. The equivalence class of $\nabla$ is denoted by $\{\nabla \}$.
\label{eq}
\end{defi}

An isomorphism class $\mathcal{V} \,=\, [V\, , \{ \nabla \}]$ as in Definition
\ref{eq} is called a
\emph{structured vector bundle}. The direct sum of $\mathcal{V}\,=\,[V\, , \{ \nabla
_V \}]$ and $\mathcal{W} \,=\, [W\, , \{ \nabla_W \}]$ is defined as
$$
\mathcal{V}\oplus\mathcal{W}\,=\, [V\oplus W\, , \{ \nabla_V \oplus \nabla _W \}]\, .
$$

A \textit{symplectic} (respectively, \textit{orthogonal}) bundle on $X$ is a pair
$(E \, ,\varphi)$, where $E$ is a $C^\infty$ vector bundle on $X$ and $\varphi$
is a smooth section of $E^* \otimes E^*$, such that
\begin{enumerate}
\item{} The bilinear form on $E$ defined by $\varphi$ is anti-symmetric (respectively, symmetric), and

\item{} the homomorphism
\begin{equation}\label{e1}
E\,\longrightarrow\, E^*
\end{equation}
defined by contraction of $\varphi$ is an isomorphism, equivalently, the form
$\varphi$ is fiber-wise non-degenerate.
\end{enumerate}

A connection on a vector bundle $E$ induces a connection on $E^* \otimes E^*$. A
\textit{symplectic} (respectively, \textit{orthogonal}) connection on a symplectic
(respectively, orthogonal) bundle $(E \, ,\varphi)$ is a $C^\infty$ connection
$\nabla$ on the vector bundle $E$ such that the section $\varphi$ is parallel with
respect to the connection on $E^* \otimes E^*$ induced by $\nabla$.

If $(E \, ,\varphi)$ is a symplectic (respectively, orthogonal) bundle, then the
inverse of the isomorphism in \eqref{e1} produces a symplectic (respectively,
orthogonal) structure $\varphi'$ on $E^*$, because $(E^*)^*\,=\, E$. Let
$\nabla$ be a symplectic (respectively, orthogonal) connection on the
symplectic (respectively, orthogonal) bundle $(E \, ,\varphi)$. Then the
connection $\nabla'$ on $E^*$ induced by $\nabla$ is a symplectic
(respectively, orthogonal) connection on $(E^*, \varphi')$, where $\varphi'$
is defined above. We note that the isomorphism in \eqref{e1} takes $\varphi$ and
$\nabla$ to $\varphi'$ and $\nabla'$ respectively.

We define the holonomy version of differential K-theory next.
\begin{defi}
Let $V$ be a vector bundle with a connection $\nabla _V$ whose holonomy
is in a group $G$. Let $\{ \nabla _V \}_G$ denote the
equivalence class of all such connections and denote the corresponding
structured bundles by $\cV$. Also, let $T_G$ denote the free group of such structured
bundles. The following group is the holonomy version of differential K-theory on a smooth
manifold $X$:
\begin{gather}
\widehat{K^0}_G (X) = \frac{T_G}{\cV+\cW - \cV \oplus \cW}
\end{gather}
\label{holodiffK}
\end{defi}

\begin{rema}
Notice that we require all the equivalent connections in $\{ \nabla_V \}_G$ to have holonomy in $G$ as a part of the definition
of equivalence. In particular, equivalent connections in the sense of \cite{SS} do not necessarily have their holonomy in the same group.
\end{rema}

From now onwards we drop the subscript $G$ whenever it is clear from the context.

\section{Proof of Theorem \ref{main}}

We divide the proof of Theorem \ref{main} into two cases.

\subsection{The case of $\mathbf{G=GL(N,\mathbb{C})}$}\label{s2.1}
\textbf{}\\
 This case has already been covered in \cite{PT}. However here we provide a different proof.
Our approach relies on Lemma \ref{trivsublemma} proved below. We believe that
Lemma \ref{trivsublemma} maybe of interest in its own right. The geometrical content
of the above mentioned lemma is that on $\mathbb{R}^n$ every trivial bundle with
a connection is a subbundle, equipped with the induced connection, of a trivial bundle
equipped with a flat connection.

\begin{lemma}
Let $V$ be a trivial complex vector bundle of rank $r$ on $\mathbb{R}^n$, and let
$A$ be a connection on $V$. Then there
exists an invertible, smooth $(2n+2)r \times (2n+2)r$ complex matrix valued function $g$
such that $A_{ij}\,=\,[dg g^{-1} ] _{ij}$, where $1\,\leq\, i\, , j \,\leq\, r$.
\label{trivsublemma}
\end{lemma}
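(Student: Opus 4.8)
The plan is to realize the connection matrix $A$ (an $r\times r$ matrix of $1$-forms on $\mathbb{R}^n$, say $A \,=\, \sum_{k=1}^n A_k\, dx_k$ with each $A_k$ a smooth $r\times r$ matrix-valued function) as the upper-left corner of a logarithmic derivative $dg\, g^{-1}$ of a single invertible $(2n+2)r\times(2n+2)r$ matrix-valued function. The shape to aim for is block-triangular. Write the trivial bundle $V$ as the top block and build $g$ so that $g$ is unipotent-plus-something, with $g$ restricted to the top $r$ coordinates recording a ``parallel frame'' for $A$ to first order. Concretely, one looks for $g$ of the form $g \,=\, \exp(P)$ or $g \,=\, I + P$ where $P$ is a strictly block-upper-triangular nilpotent matrix whose entries involve the components $A_k$ and auxiliary functions, chosen so that $dg\, g^{-1}$ has the prescribed $A$ in its $(i,j)$ block for $1\le i,j\le r$.

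The key mechanism is the following local ``integrating factor'' idea. Over each coordinate direction $x_k$ one can solve the ODE $\partial_k h_k \,=\, A_k h_k$ with $h_k(0)\,=\,I$ along the line, but these $n$ solutions need not patch into a single frame because $A$ is not flat. The trick that buys the extra rank is to add, for each direction $k$, one copy of $V$ carrying the ``partial'' solution $h_k$ together with a correction block that absorbs the curvature mismatch $\partial_j A_k - \partial_k A_j$; this accounts for the $n$ extra copies (giving $(n+1)r$), and doubling (to $(2n+2)r$) provides room to invert/normalize the blocks so that $g$ is globally invertible on all of $\mathbb{R}^n$ rather than merely where some determinant is nonzero. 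So the steps are: (1) set up the block decomposition $\mathbb{C}^{(2n+2)r} \,=\, V_0 \oplus V_1 \oplus \cdots$; (2) write down an explicit block matrix $g$ whose blocks are built from $A_k$, their line integrals, and constants; (3) compute $dg$ and $dg\, g^{-1}$ block by block and check the $(1,1)$ block equals $A$; (4) verify $g$ is invertible everywhere, using the triangular structure (so $\det g$ is a product of determinants of diagonal blocks, arranged to be identically $1$ or manifestly nonvanishing).

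The main obstacle is step (2)--(3): producing an \emph{explicit} $g$ that is simultaneously (a) globally invertible on $\mathbb{R}^n$ — which is why the rank is inflated well beyond $r$, so that the construction can use unipotent blocks rather than requiring some Wronskian-type determinant to stay nonzero — and (b) has logarithmic derivative with exactly $A$ in the top corner, with no constraint needed on $A$ (in particular $A$ is not assumed flat). The bookkeeping of which curvature terms land in which off-diagonal blocks of $dg\, g^{-1}$, and confirming they do not pollute the $(i,j)$ entries for $i,j\le r$, is where the real work lies; once the right ansatz for $g$ is in hand, the verification is a direct matrix computation. I expect the contractibility of $\mathbb{R}^n$ to enter only through the freedom to integrate the $A_k$ along coordinate rays with a fixed basepoint, and the count $2n+2$ to be exactly what this particular ansatz needs (not optimal, just convenient).
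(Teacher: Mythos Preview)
Your proposal is a plan rather than a proof, and the specific ansatz you commit to cannot work. If $g=I+P$ (or $g=\exp P$) with $P$ strictly block--upper--triangular, then $dg$ is strictly block--upper--triangular and $g^{-1}$ is block--upper--triangular with identity diagonal blocks; consequently the $(1,1)$ block of $dg\,g^{-1}$ is forced to be zero, not $A$. So a purely unipotent $g$ never produces a nonzero top--left corner in its logarithmic derivative, and the phrase ``unipotent--plus--something'' is exactly where the content would have to be. You acknowledge that producing the explicit $g$ ``is where the real work lies,'' but that is the entire lemma: without the ansatz there is no argument. The parallel--transport--along--coordinate--lines idea is also not promising as stated, since the solutions $h_k$ of $\partial_k h_k=A_k h_k$ along rays do not naturally assemble into blocks of a single invertible matrix whose logarithmic derivative has prescribed corner; the ``curvature mismatch'' terms you want to push into off--diagonal blocks will in general feed back into the $(1,1)$ block.

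The paper's proof takes a completely different, purely algebraic route with no ODEs and no parallel transport. One first writes each coefficient matrix $A_k$ as a difference of two \emph{positive--definite} matrix functions via $A_k=(2I+A_k^{\dagger}A_k+A_k)-(2I+A_k^{\dagger}A_k)$, and writes $\pm\,dx^k$ as $e^{\mp x^k}\,d(e^{\pm x^k})$. This yields $A=\sum_{k=1}^{2n} f_k\,dh_k$ with $h_k>0$ scalar and $f_k$ positive--definite $r\times r$. The point of positivity is that $\sum_k h_k f_k$ is then everywhere invertible, which is what guarantees global invertibility of the explicit $(2n+2)r\times(2n+2)r$ matrix $g$ the paper writes down (built from the $h_k$, the $f_k$, and $(\sum_k h_k f_k)^{-1}$). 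That $g$ is \emph{not} block--triangular; it is designed so that the first $r$ rows of $dg$ are $[dh_1 I,\ldots,dh_{2n}I,0,0]$ while the first $r$ columns of $g^{-1}$ are $[f_1;\ldots;f_{2n};-\sum h_k f_k;I]$, whence $[dg\,g^{-1}]_{ij}=A_{ij}$ for $1\le i,j\le r$. The count $2n+2$ comes from the $2n$ terms in the sum plus two auxiliary blocks needed to make $g$ invertible, not from any curvature bookkeeping.
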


\begin{proof}
Notice that $A \,=\, \displaystyle \sum _{k=1} ^n A_k dx^k$, where $A_k$ are smooth
$r\times r$ complex matrix valued functions and $x^k$ are coordinates on $\mathbb{R}^n$.
We may write $A_k$ as
$$
A_k \,=\, 2I+A_k ^{\dag} A_k+A_k - (2I+A_k ^{\dag} A_k)\, .
$$
Using this it can be deduced that $A_k$ is a difference of two smooth functions with
values in $r\times r$ positive definite
matrices (an $r\times r$ matrix $B$ is called positive definite if
$v^{\dag} (B+B^{\dag}) v \,>\, 0 \ \forall \ v \neq 0$). Indeed, we have
$$
I+A_k ^{\dag} A_k+\frac{A_k + A_k ^{\dag}}{2} \,=\, \left(I+\frac{A_k}{2}\right)^{\dag}
\left(I+\frac{A_k}{2}\right) +\frac{3}{4} A_k ^{\dag} A_k \,\geq\, 0\, .
$$ Also, $dx^k \,=\, e^{-x^k} d(e^{x^k})$ and
$-dx^k\,=\, e^{x^k} d(e^{-x^k})$. Hence
$$A\,=\, \displaystyle \sum _{k=1} ^{2n} f_k dh_k$$ where the $h_i$ are
positive smooth functions and the $f_i$ are $r\times r$ positive-definite smooth matrix-valued functions.

We may attempt to find $g$ by forcing the first $r\times (2n+2)r$ sub-matrix of $dg$ to be
$$\left [ \begin{array}{ccccc}
dh_1 Id_{r\times r} &\ldots & dh_{2n} Id_{r\times r} & 0 & 0
\end{array} \right ]$$
and the first $(2n+2)r\times r$ sub-matrix of $g^{-1}$ to be $$\left [ \begin{array}{c} f_1 \\ f_2 \\ \vdots \\ f_{2n} \\ -\sum h_k f_k \\ Id_{r\times r} \end{array} \right ].$$ If we manage to find such a $g$, then $A_{ij} \,=\, [dg g^{-1}]_{ij}$. \\

Indeed, we claim that the matrix $g$ defined by
\begin{gather}
 g=\left [ \begin{array}{cccccc}
h_1 Id_{r\times r} & h_2 Id_{r\times r} & \ldots & h_{2n}Id_{r\times r} & Id_{r\times r} & Id_{r\times r} \\
Id_{r\times r} & 0 & \ldots & 0 & f_1 \left(\displaystyle \sum_k h_k f_k\right) ^{-1} & 0 \\
0 & Id_{r\times r} & \ldots & 0 & f_2\left(\displaystyle \sum_k h_k f_k \right)^{-1} & 0 \\
\vdots & \vdots & \vdots & \vdots & \vdots & \vdots \\
0 & 0 & \ldots & Id_{r\times r} & f_{2n}\left(\displaystyle \sum_k h_k f_k\right)^{-1} & 0 \\
0 & 0 & \ldots & 0 & \left(\displaystyle \sum _k h_k f_k\right)^{-1} & Id_{r\times r} \\
\end{array} \right ]\nonumber
\end{gather}
does the job. (Here $Id_{r\times r}$ is the $r\times r$ identity matrix.) This can be verified by a
straightforward computation. Note that $\displaystyle \sum _k h_k f_k$ is invertible because $h_k>0$ and $f_k + f_k ^{\dag} >0$. 
\end{proof}

Since $\mathbb{R}^n$ is simply connected, any flat bundle on it has trivial monodromy.
Lemma \ref{trivsublemma} implies the following generalization:

\begin{prop}
Let $(V\, ,\nabla \,=\, d+A)$ be a complex rank $r$ vector bundle equipped with a
connection on a smooth manifold $X$ of
dimension $n$. Then there exists a trivial
complex vector bundle $T$ of rank $(4n+8r+2)(n+2r)$ on $X$, and a
smooth flat connection with trivial monodromy
$\widetilde{\nabla}\,=\,d+\widetilde{A}$ on $T$, such that
\begin{itemize}
\item $V\oplus W \,=\, T$ for some
smooth complex vector bundle $W$, and

\item{} the connection $A$ is induced from $\widetilde{A}$.
\end{itemize}
\label{trivsub}
\end{prop}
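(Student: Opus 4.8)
The plan is to deduce Proposition \ref{trivsub} from Lemma \ref{trivsublemma} in two reduction steps. First I would replace $V$ by a trivial bundle by realizing it as a subbundle of one; then I would remove the restriction that the base be $\mathbb{R}^n$ by using a Whitney embedding of $X$ to manufacture globally defined functions playing the role of the coordinate functions $x^k$ in the proof of Lemma \ref{trivsublemma}. The observation that makes the second step possible is that the construction of the matrix $g$ in the proof of Lemma \ref{trivsublemma} uses nothing about $\mathbb{R}^n$ once the connection form has been written as a finite sum $\sum_k f_k\, dh_k$ with each $f_k$ a positive-definite matrix-valued function and each $h_k$ a positive scalar function; so the whole game is to produce such a decomposition over an arbitrary $X$.

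\emph{Step 1: reduction to a trivial bundle.} A standard genericity argument (finitely many sections of $V^*$ in general position span $V^*$ everywhere) realizes $V$ as a $C^\infty$ subbundle of a trivial bundle $T_0 = X\times\mathbb{C}^{m}$ with $m=n+2r$. I would then fix a complementary subbundle $W_0\subset T_0$, choose an arbitrary connection $\nabla_{W_0}$ on $W_0$, and set $\nabla_0 := \nabla\oplus\nabla_{W_0}$ on $T_0$. Since $T_0$ is globally trivial, $\nabla_0 = d+A_0$ for a globally defined $m\times m$ matrix of $1$-forms $A_0$, and $\nabla$ is by construction the connection induced on the subbundle $V\subset T_0$ by $\nabla_0$. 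Because the induced-connection construction is transitive for a nested pair of subbundles with compatible splittings, it then suffices to embed the trivial bundle with connection $(T_0,\nabla_0)$, equipped with its own induced connection, into a trivial bundle carrying a flat connection of trivial monodromy.

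\emph{Step 2: globalizing Lemma \ref{trivsublemma}, and verification.} Embed $X$ as a submanifold of $\mathbb{R}^{q}$ with $q=2(n+2r)$ (possible by a Whitney embedding into a smaller Euclidean space followed by a linear inclusion), and let $x^1,\dots,x^q$ be the restrictions of the ambient coordinate functions, so $dx^1,\dots,dx^q$ span $T^*X$ at every point. A smooth right inverse to the surjective bundle map $X\times\mathbb{R}^q\to T^*X$ yields $A_0=\sum_{j=1}^q B_j\, dx^j$ with the $B_j$ smooth $m\times m$ matrix-valued. Exactly as in the proof of Lemma \ref{trivsublemma}, I would write $B_j=(2I+B_j^{\dagger}B_j+B_j)-(2I+B_j^{\dagger}B_j)$ as a difference of two positive-definite matrix-valued functions and use $\pm dx^j=e^{\mp x^j}\,d(e^{\pm x^j})$ to get $-A_0=\sum_{k=1}^{2q} f_k\, dh_k$ with the $h_k$ positive and the $f_k$ positive-definite. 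Feeding the $2q$ pairs $(f_k,h_k)$ into the explicit matrix from the proof of Lemma \ref{trivsublemma} produces an invertible smooth matrix-valued function $g$ on $X$ of size $(2q+2)m=(4n+8r+2)(n+2r)$ whose top-left $m\times m$ block of $(dg)g^{-1}$ is $-A_0$. I would then set $T:=X\times\mathbb{C}^{(2q+2)m}$ and $\widetilde\nabla:=d-(dg)g^{-1}$. Since $\widetilde\nabla = g\circ d\circ g^{-1}$, it is flat, and its parallel sections are exactly $x\mapsto g(x)v$ for constant vectors $v$; as $g$ is defined on all of $X$ this is a global parallel frame, so $\widetilde\nabla$ has trivial monodromy and $(T,\widetilde\nabla)\cong(X\times\mathbb{C}^{M},d)$. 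Finally, the span of the first $m$ standard basis vectors is a copy of $T_0$ in $T$, and the connection $\widetilde\nabla$ induces on it is $d$ minus the top-left $m\times m$ block of $(dg)g^{-1}$, namely $d+A_0=\nabla_0$. Composing with Step 1, $V$ is a subbundle of $T$ whose induced connection is $\nabla$, and $W$ may be taken to be any complement of $V$ in $T$.

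\emph{The main obstacle.} The hard part is precisely the globalization in Step 2: Lemma \ref{trivsublemma} is a statement about $\mathbb{R}^n$, and the real content of the proposition is that its conclusion survives the passage to an arbitrary manifold. The genuinely new input is the need for finitely many \emph{globally defined} functions on $X$ whose differentials span $T^*X$, which a Whitney embedding supplies; given those, the ``difference of positive-definite parts'' trick and the identities $\pm dx^j=e^{\mp x^j}d(e^{\pm x^j})$ carry over verbatim and the rest is the same explicit linear algebra as before, together with the (routine but necessary) fact that inducing connections on subbundles is transitive, which is what lets one pass back from $T_0$ to $V$. I would also be careful to track ranks throughout, so as to land on exactly the value $(4n+8r+2)(n+2r)$ claimed in the statement.
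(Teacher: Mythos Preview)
Your argument is correct and follows the same overall strategy as the paper --- reduce to a trivial bundle with connection, then use a Whitney embedding together with Lemma \ref{trivsublemma} --- but the execution differs in two places worth noting. First, the paper handles your Step~1 and the embedding of $X$ in one stroke by Whitney-embedding the \emph{total space} of $V$ into $\mathbb{R}^{2(n+2r)}$; then $X$ sits there as the zero section and $V$ sits inside $T\mathbb{R}^{2(n+2r)}|_X\cong X\times\mathbb{C}^{n+2r}$ via the vertical tangent space, which bypasses your genericity argument. Second, and more substantively, the paper does not re-run the proof of Lemma \ref{trivsublemma} on $X$: it extends $\nabla_Q$ from $X$ to the trivial bundle over all of $\mathbb{R}^{2(n+2r)}$ using a tubular neighborhood and a partition of unity, applies Lemma \ref{trivsublemma} as a black box there, and restricts the resulting flat bundle back to $X$. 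Your route instead observes that the explicit matrix $g$ needs only finitely many globally defined functions whose differentials span $T^*X$ --- which the restricted ambient coordinates provide --- and performs the construction directly on $X$. Both paths land on the same rank $(4n+8r+2)(n+2r)$; the paper's is a bit shorter because the lemma is used off the shelf, while yours makes transparent exactly which feature of $\mathbb{R}^n$ is doing the work.
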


\begin{proof}
Using the Whitney embedding theorem, there is an embedding of the total space of $V$ in $\mathbb{R}^{2n+4r}$. The zero
section of $V$ is diffeomorphic to $X$ (and hence $X$ also sits in $\mathbb{R}^{2n+4r}$). The tangent
bundle $TV$ of $V$ is a subbundle of $T\mathbb{R}^{2n+4r}\vert_V$.

By endowing $V$ with the metric induced from the Euclidean metric on $\mathbb{R}^{2n+4r}$, we
may find the orthogonal
complement $TV^{\perp}$ of $TV$.
It satisfies $$TV\vert_X\oplus TV^{\perp} \,=\, T\mathbb{R}^{2(n+2r)}\vert_X\, .$$ The vector
bundle $V$ itself may be identified with a subbundle
of $TV$. Using the induced metric we may find the orthogonal complement $V^{\perp}$ of $V$ in $TV$. Therefore, there
exists a vector bundle $U = V^{\perp} \oplus TV^{\perp}$ on $X$ such that $V\oplus U\, =\,X \times
\mathbb{C}^{n+2r}\,=\, Q$.

We may endow $U$ with some arbitrary connection $\nabla _U$. This induces the connection
$\nabla_Q \,=\, \nabla \oplus \nabla _U$ on the bundle $Q$. Using a tubular neighborhood and a
partition of unity we may extend $\nabla_Q$ from $X$ to a connection $\nabla_{\widetilde{Q}}$
on the trivial vector bundle of rank $(n+2r)$
defined on all of $\mathbb{R}^{2n+4r}$. Now we may use Lemma \ref{trivsublemma} to come up with
a vector bundle $\widetilde{T}$ of rank $(4n+8r+2)(n+2r)$
on $\mathbb{R}^{2n+4r}$, equipped with a flat connection $\nabla _{\widetilde{T}}$, such that
$\nabla_{\widetilde{Q}}$ is induced from it. Restricting our attention to $X$ we see that
the vector bundle $T\,=\,\widetilde{T}\vert_X$ equipped with the connection
$\nabla _{\widetilde{T}} \vert_X$ satisfies the conditions in the proposition.
\end{proof}

Proposition \ref{trivsub} maybe viewed as a vector bundle version of the Nash
embedding theorem because it states that every connection arises out of a flat
connection with trivial monodromy. We now state a useful lemma \cite[Lemma 1.16]{SS}.

\begin{lemma}[Simons-Sullivan]
Let $V$ and $W$ be smooth vector bundles on a smooth manifold $X$. Let $\nabla$ be a
smooth connection on the direct sum $V\oplus W$ with curvature $R$. Let $\nabla _V$ and
$\nabla _W$ be the connections on $V$ and $W$ respectively constructed from $\nabla$ using
the decomposition of $V\oplus W$. Suppose that $R _{r,s} (V) \,\subseteq\, V$ and $R_{r,s}
(W)\,\subseteq\, W$ for all tangent vectors $r,s$ at any point of $X$. Then $$CS(\nabla _V
\oplus \nabla _W , \nabla ) \,=\, 0 \ \ \ \mathrm{modulo} \ \ \mathrm{exact}\ \
\mathrm{forms}\ \ \mathrm{on}\ \ X\, .$$
\label{sul}
\end{lemma}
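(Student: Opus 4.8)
The plan is to pass to block form with respect to the decomposition $V\oplus W$ and reduce the statement to the pointwise vanishing on $X$ of the integrand in the standard transgression formula for the Chern character.

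First I would set $\nabla_0 \,=\, \nabla_V\oplus\nabla_W$ and $a \,=\, \nabla - \nabla_0 \,\in\, C^\infty(X,\,\mathrm{End}(V\oplus W)\otimes T^*X)$. Since $\nabla_V$ and $\nabla_W$ are by definition the diagonal blocks of $\nabla$, the one-form $a$ is ``block-anti-diagonal'': it takes values in $\mathrm{Hom}(V,W)\oplus\mathrm{Hom}(W,V)$, whereas $\nabla_0$ and its curvature $F_{\nabla_0}\,=\,F_{\nabla_V}\oplus F_{\nabla_W}$ are block-diagonal. Expanding, $F_\nabla \,=\, F_{\nabla_0} + d_{\nabla_0}a + a\wedge a$, where $d_{\nabla_0}$ is the exterior covariant derivative on $\mathrm{End}(V\oplus W)$ induced by $\nabla_0$. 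Here $F_{\nabla_0}$ is block-diagonal, $a\wedge a$ is block-diagonal (it is a wedge of two block-anti-diagonal forms), and $d_{\nabla_0}a$ is block-anti-diagonal because $\nabla_0$ preserves the decomposition. Hence the block-anti-diagonal part of $F_\nabla$ is exactly $d_{\nabla_0}a$, and the hypothesis $R_{r,s}(V)\subseteq V$, $R_{r,s}(W)\subseteq W$ — which just says $F_\nabla$ is block-diagonal — is equivalent to $d_{\nabla_0}a\,=\,0$.

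Next I would interpolate along the affine path $\nabla_t \,=\, \nabla_0 + ta$, $t\in[0,1]$, whose curvature is $F_t \,=\, F_{\nabla_0} + t\,d_{\nabla_0}a + t^2\,a\wedge a \,=\, F_{\nabla_0} + t^2\,a\wedge a$ thanks to $d_{\nabla_0}a\,=\,0$; in particular $F_t$ is block-diagonal for all $t$. Then I would invoke the classical explicit transgression formula for the Chern character: the odd form
$$\mathrm{CS}(\nabla,\nabla_0) \,=\, \frac{\sqrt{-1}}{2\pi}\int_0^1 \mathrm{Tr}\!\left(a\,\exp\!\left(\frac{\sqrt{-1}}{2\pi}F_t\right)\right)\,dt$$
is a representative of the Chern--Simons class of Definition \ref{cs} (it satisfies the transgression identity because its exterior derivative equals $\int_0^1\frac{d}{dt}\mathrm{ch}(\nabla_t)\,dt$ by Chern--Weil with a parameter, and it is natural). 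Since $F_t$ is block-diagonal, so is $\exp(\frac{\sqrt{-1}}{2\pi}F_t)$, hence $a\,\exp(\frac{\sqrt{-1}}{2\pi}F_t)$ is block-anti-diagonal and has identically vanishing fiberwise trace. Therefore this representative is $0$, i.e. $\mathrm{CS}(\nabla_V\oplus\nabla_W,\nabla)\,=\,0$ modulo exact forms (reversing the arguments only flips a sign).

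The only point needing care, and the one I would be content to cite rather than reprove, is the justification that the displayed integral is a legitimate representative of $\mathrm{CS}$ in the sense of Definition \ref{cs}; this is standard Chern--Weil/Chern--Simons theory. The rest is just the linear algebra of the $V$-versus-$W$ block grading carried out above.
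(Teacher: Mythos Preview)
Your argument is correct. The block-grading computation is clean: $a$ is block-anti-diagonal, the hypothesis forces $d_{\nabla_0}a=0$, so each $F_t$ is block-diagonal and the trace in the transgression integrand vanishes identically. The one point you flag --- that the displayed integral represents the Chern--Simons class of Definition~\ref{cs} --- is indeed standard.

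As for comparison: the paper does not supply its own proof of this lemma. It is quoted as \cite[Lemma~1.16]{SS} and used as a black box. Your write-up is essentially the Simons--Sullivan argument, so there is no substantive difference in approach to discuss; you have simply filled in what the paper chose to cite.
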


Lemma \ref{sul} in conjunction with lemma \ref{trivsub} implies Theorem \ref{main} in the case $G\,=\,
{\rm GL}(N, \mathbb{C})$. Indeed, given a structured bundle $\cV \,=\,
[V, \{ \nabla _V \}$, lemma \ref{trivsub} furnishes a flat
bundle $\mathcal{T}\,=\, [T, \{ \nabla _T \}]$ such that $V$ is a subbundle of $T$ with
the connection $\nabla _V$ being induced from $\nabla _T$.
Using the natural metric on $T$ we may find an orthogonal complement $W$ to $V$ so that
$V \oplus W \,=\, T$. Endowing $W$ with the
induced connection $\nabla _W$ from $\nabla _T$ (which is flat), it is straight-forward to check
that the conditions of lemma \ref{sul} are satisfied. Let
$$\cW \,=\, [W\, , \{ \nabla _W \}]\, .$$ Using lemma \ref{sul} we see that
$$[T, \{ \nabla_{T} \}] \,=\, [V\oplus W, \{ \nabla_{T} \}] \,=\, [V\oplus
W, \{ \nabla _V \oplus \nabla _W \}] \,=\, \cV \oplus \cW\, .$$

\subsection{$\mathbf{G={\mathrm {Sp}}(2N,\mathbb{C})}$ or $\mathbf{G={\mathrm {SO}}(N,\mathbb{C})}$}
\textbf{}\label{se3.2}\\

{}From Section \ref{s2.1}
we know that there is a structured inverse $\mathcal{W} \,=\, [W\, , \{ \widetilde{\nabla} \} ]$
of $(E \, ,\nabla)$. We clarify that $W$ does not necessarily have a $G$--structure. Let
$\widetilde{\nabla}'$ denote the connection on $W^*$ induced by $\widetilde{\nabla}$.
Using the natural pairing of $W$ with $W^*$, the vector bundle $W\oplus W^*$ has a
canonical $G$--structure $\varphi_0$. We note that $\widetilde{\nabla}\oplus\widetilde{\nabla}'$ is a
$G$--connection on $(W\oplus W^*\, ,\varphi_0)$.

Clearly, $(W^*\, ,\widetilde{\nabla}')$ is a structured inverse of $(E^*\, , \nabla')$.
Therefore,
$$
(E^* \oplus W\oplus W^*\, , \nabla'\oplus \widetilde{\nabla}\oplus\widetilde{\nabla}')
$$
is a structured inverse of $(E \, ,\nabla)$. The connection
$\nabla'\oplus \widetilde{\nabla}\oplus\widetilde{\nabla}'$ preserves the $G$--structure
$\varphi'\oplus \varphi_0$ on the vector bundle $E^* \oplus W\oplus W^*$.

\section{Applications}

In this section we prove Corollary \ref{group} and Proposition \ref{holovenice}.

\subsection{Proof of Corollary \ref{group}}
\begin{enumerate}

\item Any element of $\widehat{K^0}_G(X)$ is of the form $[\cV] - [\cW]$ by definition. Since
there exists an inverse $\mathcal{Q}$ to $\cW$ such that $\mathcal{Q} \oplus \cW\,=\, [k]$,
where $[k]$ is flat with trivial monodromy, we see that $[\cV] - [\cW]\,=\, [\cV \oplus
\mathcal{Q}] - [k]$.

\item If $[\cV] \,=\, [\cW]$ in $\widehat{K^0}_G(X)$, then $\cV \oplus \mathcal{P} \,=
\,\cW \oplus \mathcal{P}$ for some structured
bundle $\mathcal{P}$. Let $\mathcal{E}$ be an inverse of $\mathcal{P}$. Adding $\mathcal{E}$
to both sides we see that $\cV
\oplus [N ] \,=\, \cW \oplus [N]$ for some flat vector bundle $N$ with trivial monodromy.
\end{enumerate}

\subsection{Proof of Proposition \ref{holovenice}}

Using the Venice lemma in \cite{PT} we see that there exists a trivial bundle
$\widetilde{T}$ with a connection $\nabla_{\widetilde{T}} \,=\, d+A$ such that
$$\frac{d\eta}{2} \,=\,
\mathrm{ch}(\widetilde{T},\nabla _{\widetilde{T}}) - \mathrm{ch} (\widetilde{T},d)\, .$$ It is
not necessarily the case that the holonomy of $\nabla_{\widetilde{T}}$ lies in $G$. However, we
know that $$\frac{d\eta}{2} \,=\,\mathrm{ch}(\widetilde{T}^{*},\nabla _{\widetilde{T}^{*}})
- \mathrm{ch} (\widetilde{T}^{*},d)$$ because $d\eta$ is an even form. Therefore,
$$d\eta \,=\,\mathrm{ch}(\widetilde{T}^{*}\oplus \widetilde{T},\nabla _{\widetilde{T}^{*}}
\oplus \nabla _{\widetilde{T}}) - \mathrm{ch}
(\widetilde{T^{*}}\oplus \widetilde{T},d)\, .$$ Using the same reasoning as in Section
\ref{se3.2} we obtain the desired result.

\section*{Acknowledgements}

We are grateful to the referee for detailed comments to improve the exposition.
The first--named author acknowledges the support of a J. C. Bose Fellowship.

\end{document}